\title{Reverse superposition estimates in Sobolev spaces}
\author{Jean Van Schaftingen}
\address{Universit\'e catholique de Louvain\\ 
Institut de Recherche en Math\'ematique et Physique\\
Chemin du Cyclotron 2 bte L7.01.01\\
1348 Louvain-la-Neuve\\
Belgium}
\email{Jean.VanSchaftingen@UCLouvain.be}
\newtheorem{theorem}{Theorem}[section]
\newtheorem{proposition}[theorem]{Proposition}
\theoremstyle{definition}
\theoremstyle{remark}
\newtheorem{remark}[theorem]{Remark}
\numberwithin{equation}{section}
\DeclarePairedDelimiter{\brk}{(}{)}
\DeclarePairedDelimiter{\abs}{\lvert}{\rvert}
\DeclarePairedDelimiterX{\intvc}[2]{[}{]}{#1,#2}
\DeclarePairedDelimiterX{\intvl}[2]{(}{]}{#1,#2}
\DeclarePairedDelimiterX{\intvr}[2]{[}{)}{#1,#2}
\DeclarePairedDelimiterX{\intvo}[2]{(}{)}{#1,#2}
\DeclarePairedDelimiterX{\setcond}[2]{\{}{\}}{#1 \,\delimsize\vert\, #2}
\newcommand{\defeq}{\coloneqq}
\newcommand{\compose}{\,\circ\,}
\DeclareMathOperator*{\essosc}{ess\,osc}
\DeclareMathOperator*{\esssup}{ess\,sup}
\DeclareMathOperator*{\essrg}{ess\,rg}
\DeclareMathOperator{\diam}{diam}
\newcommand{\Rset}{\mathbb{R}}
\newcommand{\Nset}{\mathbb{N}}
\newcommand{\Zset}{\mathbb{Z}}
\newcommand{\dif}{\,\mathrm{d}}
\DeclareMathOperator{\sign}{sgn}
\newcommand{\eofs}{\,}
\begin{document}
\begin{abstract}
%arXiv compatible LaTeX in the abstract!
We study when and how the norm of a function $u$ in the homogeneous Sobolev spaces $\dot{W}^{s, p} (\mathbb{R}^n, \mathbb{R}^m)$, with $p \ge 1$ and either $s = 1$ or $s > 1/p$, is controlled by the norm of composite function $f \circ u$ in the same space.
\end{abstract}

\keywords{Fractional Sobolev space; superposition operator; reverse inequality.}
\subjclass[2010]{46E35 (47H30)}
\maketitle

\section{Introduction}
The absolute value preserves weak differentiability despite its non-differentiability at \(0\)  \cite{Marcus_Mizel_1972} (see also \citelist{\cite{Willem_2013}*{corollary 6.1.14}\cite{Ziemer_1989}*{corollary 2.1.8}\cite{Gilbarg_Trudinger_1983}*{lemma 7.6}}). 
More precisely, if \(u\) belongs to the homogeneous first-order Sobolev space \(\dot{W}^{1, p} (\Omega, \Rset)\) for some \(p \in [1, \infty)\), that is, if the function \(u \colon \Omega \to \Rset\) is weakly differentiable on the open set \(\Omega \subseteq \Rset^n\) and its weak derivative \(D u\) satisfies the integrability condition \(\int_{\Omega} \abs{D u}^p < +\infty\), then \(\abs{u} \in \dot{W}^{1, p} (\Omega, \Rset)\); moreover, one has then
\begin{equation}
\label{eq_aihohwish3ahl1eiTheixaeh}
 D \abs{u} = \sign (u) Du \qquad \text{almost everywhere in \(\Omega\)},
\end{equation}
where the signum function \(\sign\) is defined by \(\sign (t) = -1\) when \(t < 0\), \(\sign(0) = 0\) and \(\sign(t)=1\) when \(t > 0\). A consequence of the identity \eqref{eq_aihohwish3ahl1eiTheixaeh} and of the fact that \(Du = 0\) almost everywhere on \(u^{-1} (\{0\})\) is the  integral identity
\begin{equation}
 \int_{\Omega} \abs{D \abs{u}}^p = \int_{\Omega} \abs{D u}^p,
\end{equation}
which can be interpreted either as an estimate in the homogeneous Sobolev space \(\dot{W}^{1, p} (\Omega)\) for \(\abs{u}\) in terms of \(u\), or conversely as an a priori estimate for \(u\) in terms of \(\abs{u}\), \emph{provided} it is known a priori that \(u \in \dot{W}^{1, p} (\Omega, \Rset)\). We will adopt the latter point of view. 

This result about the absolute value is a particular case of reverse estimates for superposition operators \(u \mapsto f \compose u\),  for \(u \in \dot{W}^{1, p} (\Rset^n, \Rset^m)\) and \(f : \Rset^m \to \Rset^\ell\). 
We state in \cref{theorem_reverse_first} below a wide condition on the function \(f\) which ensures that \(u \in \dot{W}^{1, p} (\Rset^n, \Rset^m)\) is controlled by \(f \compose u \in \dot{W}^{1, p} (\Rset^n, \Rset^\ell)\); this condition does not require that \(f \compose u \in \dot{W}^{1, p} (\Rset^n, \Rset^\ell)\) when \(u \in \dot{W}^{1, p} (\Rset^n, \Rset^m)\).

We next consider the question whether such reverse superposition estimate extend to the \emph{homogeneous fractional Sobolev space}  
\begin{equation}
\dot{W}^{s, p} (\Omega, \Rset^m)
\defeq 
\setcond[\bigg]{u\colon \Omega \to \Rset^m}{\iint\limits_{\Omega \times \Omega} \frac{\abs{u (y)- u(x)}^p}{\abs{y - x}^{n + sp}}  \dif y \dif x < +\infty},
\end{equation}
with \(0 < s < 1\) and \(1 \le p < +\infty\).
Athough there is no identity such as \eqref{eq_aihohwish3ahl1eiTheixaeh} for fractional Sobolev spaces, we prove that when \(sp > 1\) there exists a constant such that for every \(u \in \dot{W}^{s, p} (\Omega, \Rset)\), the reverse estimate 
\begin{equation}
\label{eq_ait6beethei2chooCah7zohz}
  \iint\limits_{\Omega \times \Omega} \frac{\abs{u (y) - u (x)}^p}{\abs{y - x}^{n + sp}} \dif y \dif x\\
 \le C
 \iint\limits_{\Omega \times \Omega} \frac{\abs[\big]{\abs{u (y)} - \abs{u (x)}}^p}{\abs{y - x}^{n + sp}} \dif y \dif x
\end{equation}
holds. 
The estimate \eqref{eq_ait6beethei2chooCah7zohz} is a particular case of a class of reverse estimates for superposition operators (\cref{theorem_reverse_fractional}). The proof of \eqref{eq_ait6beethei2chooCah7zohz} is based on a reverse oscillation obtained by Petru Mironescu and the author in the lifting of fractional Sobolev mappings over a compact covering \cite{Mironescu_VanSchaftingen_CpctLift}.

When \(sp \le 1\), the reverse estimate \eqref{eq_ait6beethei2chooCah7zohz} fails. 
In fact there exists an unbounded sequence \((u_j)_{j \in \Nset}\) in \(\dot{W}^{s, p} (\Omega, \Rset)\) such that for each \(j \in \Nset\) the function \(\abs{u_j}\) is constant on \(\Omega\) (\(sp <1\), \cref{proposition_non_estimate_sp_lt_1}) or such that the sequence \((\abs{u_j})_{j \in \Nset}\) remains bounded in \(\dot{W}^{s, p} (\Omega, \Rset)\) (\(sp = 1\), \cref{proposition_non_estimate_sp_eq_1}).

When \(p = 2\), an estimate of the form \eqref{eq_ait6beethei2chooCah7zohz} still holds when \(1 < s < 3/2\) with a suitable definition of fractional Sobolev norm \cite{Musina_Nazarov_2019}.

\section{Reverse estimates for first-order Sobolev spaces}
Our first result is a reverse estimate for weakly differentiable functions.
\begin{theorem}
\label{theorem_reverse_first}
If the set \(\Omega \subseteq \Rset^n\) is open, if the function \(f : \Rset^m \to \Rset^\ell\) is Borel-measurable, if \(u \in \dot{W}^{1,1}_{\mathrm{loc}} (\Omega, \Rset^m)\) and if \(f \compose u \in \dot{W}^{1, 1}_{\mathrm{loc}} (\Omega, \Rset^\ell)\), then for almost every \(x \in \Omega\) and every \(h\in \Rset^n\),
\begin{equation}
\label{eq_ilie8Ob4neetue7ahtoh9ub2}
 \abs{Du(x)[h]} \le \abs{D (f \compose u)(x)[h]} \limsup_{y \to u (x)} \frac{\abs{y - u (x)}}{\abs{f (y) - f (u (x))}}
\end{equation}
\end{theorem}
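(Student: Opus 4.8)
The plan is to invoke the almost-everywhere approximate differentiability of functions in \(\dot{W}^{1,1}_{\mathrm{loc}}\), applied to \(u\) and to \(f \compose u\) at the same time. Fix \(x\) in the set of full measure on which \(u\) is approximately differentiable with approximate differential equal to its weak derivative \(Du (x)\) and on which \(f \compose u\) --- understood as the pointwise composition, which differs from its Sobolev representative only on a null set, hence on a set of density zero at almost every point --- is approximately differentiable with approximate differential \(D (f \compose u)(x)\); set \(a \defeq Du (x)\) and \(b \defeq D (f \compose u)(x)\). Intersecting the two density-one sets provided by approximate differentiability yields a single measurable set \(E \subseteq \Omega\) of density one at \(x\) along which, as \(z \to x\) with \(z \in E\),
\begin{equation*}
 u (z) = u (x) + a[z - x] + o (\abs{z - x})
 \quad \text{and} \quad
 f (u (z)) = f (u (x)) + b[z - x] + o (\abs{z - x}).
\end{equation*}

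Next fix \(h \in \Rset^n\); one may assume \(h \neq 0\) and set \(\hat{h} \defeq h / \abs{h}\). Since \(E\) has density one at \(x\), every open circular cone with apex \(x\) and axis \(\hat{h}\) meets \(E\) arbitrarily close to \(x\), so a diagonal argument produces a sequence \((z_k)_{k \in \Nset}\) in \(E\) with \(z_k \to x\) and \((z_k - x)/\abs{z_k - x} \to \hat{h}\). Writing \(t_k \defeq \abs{z_k - x} \to 0\) and using the continuity of the linear maps \(a\) and \(b\), the two expansions above give
\begin{equation*}
 u (z_k) - u (x) = t_k\, a[\hat{h}] + o (t_k)
 \quad \text{and} \quad
 f (u (z_k)) - f (u (x)) = t_k\, b[\hat{h}] + o (t_k).
\end{equation*}

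Assume first that \(b[\hat{h}] \neq 0\). Then \(f (u (z_k)) \neq f (u (x))\), and a fortiori \(u (z_k) \neq u (x)\), for all large \(k\), while \(y_k \defeq u (z_k) \to u (x)\); consequently
\begin{equation*}
 \limsup_{y \to u (x)} \frac{\abs{y - u (x)}}{\abs{f (y) - f (u (x))}}
 \ge \lim_{k \to \infty} \frac{\abs{u (z_k) - u (x)}}{\abs{f (u (z_k)) - f (u (x))}}
 = \frac{\abs{a[\hat{h}]}}{\abs{b[\hat{h}]}},
\end{equation*}
and clearing the denominator and rescaling by \(\abs{h}\) yields \eqref{eq_ilie8Ob4neetue7ahtoh9ub2}, since \(\abs{a[\hat{h}]}\,\abs{h} = \abs{Du (x)[h]}\) and \(\abs{b[\hat{h}]}\,\abs{h} = \abs{D (f \compose u)(x)[h]}\). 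If instead \(b[\hat{h}] = 0\) but \(a[\hat{h}] \neq 0\), the same sequence makes the quotient tend to \(+\infty\), so the \(\limsup\) is \(+\infty\) and \eqref{eq_ilie8Ob4neetue7ahtoh9ub2} holds with its right-hand side read as \(+\infty\); and if \(a[\hat{h}] = 0\) the left-hand side of \eqref{eq_ilie8Ob4neetue7ahtoh9ub2} vanishes, so there is nothing to prove.

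The main obstacle is the extraction of the sequence \((z_k)\): the set \(E\) on which both first-order expansions hold is only known to have density one at \(x\) and need not contain any segment issuing from \(x\), so one must exploit that a density-one set nonetheless meets every spatial cone with apex \(x\) arbitrarily close to the apex; this is the only genuinely measure-theoretic input, the remainder being the elementary linear algebra of matching the two expansions along \((z_k)\) together with the bookkeeping of the degenerate directions where \(D (f \compose u)(x)[h] = 0\).
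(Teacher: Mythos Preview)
Your proof is correct and takes a genuinely different route from the paper's. The paper first treats the one–dimensional case, using that functions in \(\dot W^{1,1}_{\mathrm{loc}}\) of an interval are classically differentiable almost everywhere: for a.e.\ \(x\) one picks a single sequence \(h_j \to 0\) along which the difference quotients of both \(u\) and \(f \compose u\) converge, and divides. The higher–dimensional case is then obtained by slicing: restrictions of \(u\) and \(f \compose u\) to almost every line are weakly differentiable, the one–dimensional inequality applies on almost every line, and Fubini reassembles the directional estimates.

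You instead work directly in \(\Rset^n\) via the a.e.\ approximate differentiability of \(\dot W^{1,1}_{\mathrm{loc}}\) functions. The density–one set \(E\) on which both first–order expansions hold need not contain any segment through \(x\), so the slicing route is unavailable; your cone argument circumvents this by extracting a sequence in \(E\) approaching \(x\) from any prescribed direction \(\hat h\). The paper's approach uses only absolute continuity on intervals and Fubini, so its ingredients are lighter; your approach handles all \(n\) and all directions \(h\) at once without slicing, at the price of invoking the approximate–differentiability theorem and the measure–theoretic cone extraction. Your explicit treatment of the degenerate case \(D(f \compose u)(x)[h]=0\), \(Du(x)[h]\ne 0\) (showing the \(\limsup\) is then \(+\infty\)) matches what the paper leaves implicit by writing ``assuming without loss of generality'' that the \(\limsup\) is finite.
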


\begin{remark}
If the function \(f\) is classically differentiable at the point \(u (x)\), then 
\begin{equation}
 \limsup_{y \to u (x)} \frac{\abs{y - u (x)}}{\abs{f (y) - f (u (x))}}
 = \frac{1}{\sup\, \setcond[\big]{\abs{D f(u(x)) [k]}/\abs{k}}{k \in \Rset^m \setminus \{0\}}}.
\end{equation}
\end{remark}

\begin{remark}
If for each \(y \in \Rset\) the function \(f\) is defined as \(f (y) \defeq \abs{y}\), then we have for every \(z \in \Rset\),
\begin{equation}
  \limsup_{y \to z} \frac{\abs{y - z}}{\abs{\abs{y} - \abs{z}}} = 1,
\end{equation}
and \eqref{eq_ilie8Ob4neetue7ahtoh9ub2} is then in this particular case a consequence of \eqref{eq_aihohwish3ahl1eiTheixaeh}.
\end{remark}

The proof of \cref{theorem_reverse_first} follows the strategy of the general chain rule for weakly differentiable functions \cite{Ambrosio_DalMaso_1990}.

\begin{proof}[Proof of \cref{theorem_reverse_first} when \(n = 1\)]
By the characterisation of weakly differentiable functions on an interval (see for example \cite{Leoni_2017}*{theorem 7.13}),  for almost every \(x \in \Omega\) there exists a sequence \((h_j)_{j \in \Nset}\) in \(\Rset \setminus \{0\}\) converging to \(0\) such that both
\begin{gather}
\label{eq_jeisi1kadei0EeshooQuaiVi}
  \lim_{j \to \infty} \frac{u(x + h_j) - u (x)}{h_j} = u'(x)
  \intertext{and}
\label{eq_Nah1eCh9engee9ier8Que8ri}
  \lim_{j \to \infty} \frac{f(u(x + h_j)) - f(u (x))}{h_j} = (f \compose u)'(x).  
\end{gather}
Assuming without loss of generality that 
\[
 \limsup_{y \to u (x)} \frac{\abs{y - u (x)}}{\abs{f (y) - f (u (x))}}
 < +\infty
\]
we have for \(j\in \Nset\) large enough 
\(f (u (x) + h_j) \ne f (u (x))\); it then follows from the limits \eqref{eq_jeisi1kadei0EeshooQuaiVi} and \eqref{eq_Nah1eCh9engee9ier8Que8ri} that 
\begin{equation}
\begin{split}
 \abs{u'(x)}
 &= 
 \abs{(f \compose u)' (x)}
 \lim_{j \to \infty} \frac{\abs{u(x + h_j) - u (x)}}{\abs{f(u(x + h_j)) - f(u (x))}}\\
 &\le \abs{(f \compose u)' (x)} \limsup_{y \to u (x)} \frac{\abs{y - u (x)}}{\abs{f (y) - f (u (x))}} .\qedhere
\end{split}
\end{equation}
\end{proof}

\begin{proof}[Proof of \cref{theorem_reverse_first} when \(n \ge 2\)]
The proof goes by noting that the restrictions of \(u\) and \(f \compose u\) to almost every one-dimensional line \(L\) are weakly differentiable (see for example \cite{Leoni_2017}*{theorem 10.35}), applying the one-dimensional case and concluding by Fubini's theorem.
\end{proof}

\section{Fractional Sobolev spaces}
In the fractional case, we have the following counterpart of \cref{theorem_reverse_first}.

\begin{theorem}
\label{theorem_reverse_fractional}
For every \(s \in \intvo{0}{1}\) and \(p \in \intvr{1}{+\infty}\) satisfying \(sp > 1\), there exists a constant \(C\) such that for every convex set \(\Omega \subseteq \Rset^n\) and every \(f : \Rset^m \to \Rset^\ell\), if \(u \in \dot{W}^{s, p} (\Omega, \Rset^m)\) and if \(f \compose u \in \dot{W}^{s, p} (\Omega, \Rset^\ell)\), then 
\begin{multline}
\label{eq_xah0iechah6Iexaef8RoB5ae}
 \iint\limits_{\Omega \times \Omega} \frac{\abs{u (y) - u (x)}^p}{\abs{y - x}^{n + sp}} \dif y \dif x\\
 \le 
 C\,
 \brk[\Big]{
 \sup\, \setcond[\Big]{\frac{\diam (K)}{\diam (f(K))}}{K \subset \essrg_{\Rset^n} u \text{ compact, connected and } \diam(K) > 0}
}^p\\
\times 
 \iint\limits_{\Omega \times \Omega} \frac{\abs{f(u (y)) - f (u (x))}^p}{\abs{y - x}^{n + sp}} \dif y \dif x\eofs.
\end{multline}
\end{theorem}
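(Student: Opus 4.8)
The plan is to reduce the inequality to dimension one by slicing, to prove the one–dimensional statement by combining the continuity of $\dot{W}^{s,p}$ functions (available because $sp>1$) with the reverse oscillation estimate of \cite{Mironescu_VanSchaftingen_CpctLift}, and then to integrate back over all lines. For the reduction, given $\omega\in\Sset^{n-1}$ and $a\in\omega^{\perp}$ set $L_{a,\omega}\defeq a+\Rset\omega$ and $I_{a,\omega}\defeq L_{a,\omega}\cap\Omega$; convexity of $\Omega$ is used precisely here, to guarantee that each $I_{a,\omega}$ is an interval. Writing $y-x=r\omega$ in polar coordinates and decomposing $x$ along $\omega$ and $\omega^{\perp}$ yields, for every measurable $v$,
\begin{equation*}
 \iint_{\Omega\times\Omega}\frac{\abs{v(y)-v(x)}^{p}}{\abs{y-x}^{n+sp}}\dif y\dif x
 = c_{n,s,p}\int_{\Sset^{n-1}}\int_{\omega^{\perp}}\iint_{I_{a,\omega}\times I_{a,\omega}}\frac{\abs{v(a+t\omega)-v(a+\tau\omega)}^{p}}{\abs{t-\tau}^{1+sp}}\dif t\dif\tau\dif a\dif\omega .
\end{equation*}
Applying this to $v=u$ and to $v=f\compose u$ and using that, for a countable basis of closed sets, the restriction of $u$ to almost every line is negligible outside such a set as soon as $u$ is, Fubini's theorem gives $\essrg_{I_{a,\omega}}(u|_{L_{a,\omega}})\subseteq\essrg_{\Rset^{n}}u$ for almost every $(a,\omega)$, so the supremum relevant on such a line does not exceed the one in the statement. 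It therefore suffices to prove the inequality for $n=1$ and $\Omega=I\subseteq\Rset$ an interval, with $C=C(s,p)$.

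For the one–dimensional estimate, since $sp>1$ the embedding $\dot{W}^{s,p}(J)\hookrightarrow C^{0,s-1/p}(\overline{J})$ on bounded subintervals provides continuous representatives $\tilde{u}$ of $u$ and $g$ of $f\compose u$ with $g=f\compose\tilde{u}$ almost everywhere. For $x<y$ in $I$ the set $\tilde{u}([x,y])$ is compact, connected, and contained in $\essrg_{\Rset^{n}}u$, so, writing $\Lambda$ for the supremum in the statement,
\begin{equation*}
 \abs{\tilde{u}(y)-\tilde{u}(x)}\le\diam\tilde{u}([x,y])\le\Lambda\,\diam f\bigl(\tilde{u}([x,y])\bigr) .
\end{equation*}
The reverse oscillation estimate then intervenes: for $sp>1$ it bounds $\diam f(\tilde{u}([x,y]))$ by a constant multiple of $\essosc_{(x,y)}(f\compose u)=\essosc_{(x,y)}g$, which is the step that handles the merely Borel nature of $f$ — it must be the continuous representative $g$, not the set-theoretic image $f(\tilde{u}([x,y]))$, that governs the estimate. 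Thus $\abs{\tilde{u}(y)-\tilde{u}(x)}\le C\Lambda\,\essosc_{(x,y)}g$ for almost every $x<y$ in $I$.

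It remains to show that for $sp>1$ there is $C=C(s,p)$ with
\begin{equation*}
 \iint_{I\times I}\frac{\bigl(\essosc_{(x\wedge y,\,x\vee y)}g\bigr)^{p}}{\abs{y-x}^{1+sp}}\dif y\dif x\le C\iint_{I\times I}\frac{\abs{g(y)-g(x)}^{p}}{\abs{y-x}^{1+sp}}\dif y\dif x .
\end{equation*}
Here I would fix $s'$ with $1/p<s'<s$ and use the one–dimensional Morrey inequality $\essosc_{(a,b)}g\le C\abs{b-a}^{s'-1/p}\bigl(\iint_{(a,b)^{2}}\abs{g(c)-g(d)}^{p}\abs{c-d}^{-1-s'p}\dif c\dif d\bigr)^{1/p}$. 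Inserting it and raising to the power $p$ produces the weight $\abs{y-x}^{s'p-1}\abs{y-x}^{-1-sp}=\abs{y-x}^{-2-(s-s')p}$, with exponent strictly above $2$; exchanging the order of integration, the remaining spatial integral over $\{(x,y)\in I^{2}: x<a\wedge b,\ a\vee b<y\}$ (and its mirror image) of $\abs{y-x}^{-2-(s-s')p}$ is at most $C\abs{a-b}^{-(s-s')p}$, which restores exactly the exponent $1+sp$ in the $g$–integral; no boundedness of $I$ is needed, so the constant is uniform. Combining this with the previous paragraph and integrating over $(a,\omega)$ finishes the proof.

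The main obstacle is the middle step, the reverse oscillation estimate: a priori $f(\tilde{u}([x,y]))$ can be far larger than $\essrg_{(x,y)}(f\compose u)$ when $f$ is only Borel, and it is exactly the regime $sp>1$ — where $u$ and $f\compose u$ are continuous along lines — that makes the comparison possible; this is the phenomenon isolated in \cite{Mironescu_VanSchaftingen_CpctLift}. The slicing identity, the behaviour of essential ranges under restriction, and the Fubini computation above are then routine.
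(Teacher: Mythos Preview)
Your overall strategy coincides with the paper's, and your last display is essentially the paper's proof of \cref{proposition_reverse_oscillation} in dimension one. Two points, though.

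The slicing to one dimension is unnecessary. The paper works directly in $\Omega\subseteq\Rset^n$: for $x,y\in\Omega$ the segment $[x,y]$ lies in $\Omega$ by convexity, the pointwise comparison is made on that segment, and \cref{proposition_reverse_oscillation} is applied to $f\compose u$ on $\Omega$ itself. Your detour through lines is not wrong, but it adds a layer that the paper avoids.

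More importantly, you have swapped the roles of the two ingredients. You invoke the ``reverse oscillation estimate'' of \cite{Mironescu_VanSchaftingen_CpctLift} to bound $\diam f(\tilde u([x,y]))$ \emph{pointwise} by $C\,\essosc_{(x,y)}(f\compose u)$, and describe this as ``the step that handles the merely Borel nature of $f$''. That is not what the reference provides: the Mironescu--Van Schaftingen result is the \emph{integral} inequality
\[
\iint\frac{(\essosc_{[x,y]} g)^p}{\abs{y-x}^{n+sp}}\dif y\dif x\le C\iint\frac{\abs{g(y)-g(x)}^p}{\abs{y-x}^{n+sp}}\dif y\dif x,
\]
which is exactly \cref{proposition_reverse_oscillation} and exactly what you re-derive afterwards. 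No integral estimate of this type can repair a pointwise discrepancy between $f(\tilde u([x,y]))$ and $\essrg_{(x,y)}(f\compose u)$. In the paper the pointwise step carries no constant at all: since $sp>1$, both $u$ and $f\compose u$ have continuous representatives on $[x,y]$ by Morrey's embedding, so $\essrg_{[x,y]}u$ is compact and connected and the paper records the \emph{identity} $\essrg_{[x,y]}(f\compose u)=f(\essrg_{[x,y]}u)$, giving
\[
\abs{u(y)-u(x)}\le\essosc_{[x,y]}u=\diam(\essrg_{[x,y]}u)\le\lambda\,\diam f(\essrg_{[x,y]}u)=\lambda\,\essosc_{[x,y]}(f\compose u).
\]
One then applies \cref{proposition_reverse_oscillation} to $f\compose u$ and is done. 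So the architecture you propose is right, but the labels are attached to the wrong steps: the cited lemma is the integral bound you prove at the end, while the middle step is an identification of essential ranges, not an estimate.
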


Here \(\essrg u\)  denotes the \emph{essential range} of the function \(u : \Omega \to \Rset^m\) with respect to Lebesgue's \(n\)--dimensional measure \(\mathcal{L}^n\), defined as 
\begin{equation}
 \essrg_{\Rset^n} u \defeq \setcond[\Big]{y \in \Rset^m }{\text{for each \(\varepsilon  > 0,\) } \mathcal{L}^n \brk[\big]{u^{-1} (B_\varepsilon (y))} > 0}.
\end{equation}

Our main tool to prove \cref{theorem_reverse_fractional} is the following reverse oscillation inequality  \cite{Mironescu_VanSchaftingen_CpctLift}.

\begin{proposition}
  \label{proposition_reverse_oscillation}
  If the set \(\Omega \subseteq \Rset^n\) is convex and if \(sp > 1\),
  then there exists a constant \(C\) such that for every \(u \in \dot{W}^{s, p} (\Omega, \Rset^m)\) one has 
  \begin{equation}
   \iint\limits_{\Omega \times \Omega}\frac{(\essosc_{[x, y]} u)^p}{\abs{y - x}^{n + sp}} \dif y \dif x
    \le C\iint\limits_{\Omega \times \Omega} \frac{\abs{u (y)- u(x)}^p}{\abs{y - x}^{n + sp}}  \dif y \dif x
    \eofs.
  \end{equation}
\end{proposition}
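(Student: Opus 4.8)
The plan is to reduce the inequality to a one-dimensional estimate by slicing $\Omega$ along lines, and then to prove that estimate by a covering argument in the plane of pairs of points.

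\smallskip\noindent\textit{Reduction to dimension one.} Writing $y = x + r\omega$ with $r > 0$ and $\omega \in \Sset^{n-1}$, and then $x = z + t\omega$ with $z \in \omega^{\perp}$, and using that the convexity of $\Omega$ makes $I_{z, \omega} \defeq \setcond{t \in \Rset}{z + t\omega \in \Omega}$ an interval, Fubini's theorem yields
\[
 \iint\limits_{\Omega \times \Omega} \frac{(\essosc_{[x, y]} u)^p}{\abs{y - x}^{n + sp}} \dif y \dif x
 = \frac{1}{2} \int_{\Sset^{n-1}} \int_{\omega^{\perp}} \left( \iint\limits_{I_{z, \omega} \times I_{z, \omega}} \frac{(\essosc_{[\sigma, \tau]} v_{z, \omega})^p}{\abs{\tau - \sigma}^{1 + sp}} \dif \sigma \dif \tau \right) \dif z \dif \omega,
\]
where $v_{z, \omega} (t) \defeq u(z + t\omega)$; the analogous identity holds with the numerators replaced respectively by $\abs{v_{z, \omega}(\tau) - v_{z, \omega}(\sigma)}^p$ and $\abs{u(y) - u(x)}^p$. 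It therefore suffices to prove, with a constant depending only on $s$, $p$ and $m$, the one-dimensional estimate for $v \in \dot{W}^{s, p}(I, \Rset^m)$ on an interval $I$; passing to the components of $v$ we may assume $v$ scalar-valued, and by affine invariance together with a monotone-convergence exhaustion we may assume $I = \intvo{0}{1}$.

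\smallskip\noindent\textit{Reduction to a distributional estimate.} Since $sp > 1$, the Morrey--Sobolev embedding $\dot{W}^{s, p}(I) \hookrightarrow C^{0, s - 1/p}(I)$, with constant independent of the interval, gives $v$ a continuous representative, so that $\essosc_{[\sigma, \tau]} v$ is its genuine oscillation $\sup_{[\sigma, \tau]} v - \inf_{[\sigma, \tau]} v$, attained on $[\sigma, \tau]$. With
\[
 \Delta^{+}_{[\sigma, \tau]} \defeq \sup \setcond{v(\tau') - v(\sigma')}{\sigma \le \sigma' \le \tau' \le \tau},
 \qquad
 \Delta^{-}_{[\sigma, \tau]} \defeq \sup \setcond{v(\sigma') - v(\tau')}{\sigma \le \sigma' \le \tau' \le \tau},
\]
one has $\essosc_{[\sigma, \tau]} v = \max(\Delta^{+}_{[\sigma, \tau]}, \Delta^{-}_{[\sigma, \tau]})$, whence $(\essosc_{[\sigma, \tau]} v)^p \le (\Delta^{+}_{[\sigma, \tau]})^p + (\Delta^{-}_{[\sigma, \tau]})^p$; by the symmetry $v \mapsto -v$ it is enough to bound $\iint_{I^2} (\Delta^{+}_{[\sigma, \tau]})^p \abs{\tau - \sigma}^{-1 - sp} \dif \sigma \dif \tau$. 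By the layer-cake formula $a^p = p \int_0^{\infty} \lambda^{p - 1} \chi_{\intvo{\lambda}{\infty}}(a) \dif \lambda$ this in turn reduces to the \textit{distributional estimate}: there are constants $C$ and $c \in \intvo{0}{1}$, depending only on $s$ and $p$, such that for every $\lambda > 0$,
\[
 \iint\limits_{E_\lambda} \frac{\dif \sigma \dif \tau}{\abs{\tau - \sigma}^{1 + sp}} \le C \iint\limits_{F_{c\lambda}} \frac{\dif \sigma \dif \tau}{\abs{\tau - \sigma}^{1 + sp}},
\]
where $E_\lambda \defeq \setcond{(\sigma, \tau) \in I^2}{\sigma < \tau,\ \Delta^{+}_{[\sigma, \tau]} > \lambda}$ and $F_\mu \defeq \setcond{(\sigma, \tau) \in I^2}{\sigma < \tau,\ \abs{v(\tau) - v(\sigma)} > \mu}$.

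\smallskip\noindent\textit{Proof of the distributional estimate.} The set $E_\lambda$ is exactly the union, over all pairs $(\sigma', \tau')$ with $\sigma' < \tau'$ and $v(\tau') - v(\sigma') > \lambda$, of the quadrants $\setcond{(\sigma, \tau)}{\sigma \le \sigma',\ \tau \ge \tau'}$. To each such pair we attach the family $\mathcal{F}(\sigma', \tau')$ of pairs $(a, b)$ with $\sigma' \le a < b \le \tau'$ and $v(b) - v(a) > c\lambda$: this family lies in $F_{c\lambda}$, and, passing to the innermost sub-segment of $[\sigma', \tau']$ on which $v$ climbs through the levels $v(\sigma') + \lambda/4$ and $v(\tau') - \lambda/4$ and using the Morrey estimate together with the H\"older continuity of $v$, one checks that the $\mu$-measure of $\mathcal{F}(\sigma', \tau')$ is at least a fixed multiple of that of the quadrant of $(\sigma', \tau')$. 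The hypothesis $sp > 1$ is decisive here: besides providing the embedding just used, it makes $\int_\rho^1 r^{-sp} \dif r$ comparable to $\rho^{1 - sp}$ rather than to $1$, so that the $\mu$-cost of ``jumping over'' a feature of width $\rho$ is of order $\rho^{1 - sp}$ on \emph{both} sides and the two quantities match --- which is exactly what fails when $sp \le 1$. A Vitali-- or Besicovitch--type covering argument then selects a countable subfamily of pairs whose quadrants still cover $E_\lambda$ while the corresponding families $\mathcal{F}(\sigma'_i, \tau'_i)$ have bounded overlap, and summing gives the estimate. The genuinely delicate point is this last step --- organising the covering so that the families in $F_{c\lambda}$ do not overlap too much; bounding $\essosc_{[\sigma, \tau]} v$ crudely by $C \abs{\tau - \sigma}^{s - 1/p} [v]_{\dot{W}^{s, p}([\sigma, \tau])}$ and integrating only gives a weaker ``logarithmic'' estimate, a near-diagonal pair being contained in a continuum of nested segments.
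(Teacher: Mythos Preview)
Your reductions to dimension one and to a distributional level-set estimate via the layer-cake formula are sound, but the heart of the matter --- the proof of the distributional estimate --- is not actually carried out. Two essential points are left as assertions. First, the claim that the weighted measure of $\mathcal{F}(\sigma',\tau')$ controls that of the quadrant $Q(\sigma',\tau')$ uniformly is stated with only a vague appeal to ``the Morrey estimate together with the H\"older continuity of $v$''; since the H\"older seminorm of $v$ is not fixed, this needs a genuine argument (presumably locating an inner sub-segment on which $v$ climbs by $\lambda/3$ and exploiting that its length is at most $\tau'-\sigma'$, together with $sp>1$). Second, and more seriously, the ``Vitali-- or Besicovitch--type covering argument'' that is supposed to select quadrants covering $E_\lambda$ while keeping the corresponding families $\mathcal{F}(\sigma_i',\tau_i')$ of bounded overlap is not given at all; you yourself flag this as ``the genuinely delicate point''. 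The geometry here is nonstandard --- the quadrants are unbounded corners, the families $\mathcal{F}$ live in the opposite corner, and neither collection is made of balls in any obvious metric --- so one cannot simply invoke a named covering lemma. As written, this is a gap, not a proof.

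The paper sidesteps the entire level-set/covering machinery with a one-line trick that directly repairs the ``logarithmic loss'' you identify in your last sentence: instead of applying the Morrey embedding at the endpoint exponent $s$, apply it at some $\sigma$ with $1/p<\sigma<s$, obtaining
\[
\bigl(\essosc_{[x,y]}u\bigr)^p\le C\iint_{[0,1]^2}\frac{\bigl|u((1-t)x+ty)-u((1-r)x+ry)\bigr|^p}{|t-r|^{1+\sigma p}}\,dt\,dr.
\]
After dividing by $|y-x|^{n+sp}$, integrating over $\Omega\times\Omega$, and changing variables $(x,y)\mapsto(w,z)=((1-t)x+ty,(1-r)x+ry)$, the Gagliardo integrand in $(w,z)$ appears with the correct exponent and the remaining $(t,r)$-integral is $\iint |t-r|^{-1+(s-\sigma)p}\,dt\,dr<\infty$. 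The slack $s-\sigma>0$ is exactly what turns your divergent $\iint|t-r|^{-1}$ into a convergent integral; no slicing, no layer-cake, and no covering are needed.
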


Here we have defined the segment \([x, y] = \setcond{(1 -t)x + ty}{0 \le t \le 1}\) and the \emph{essential oscillation} 
\begin{equation}
\label{eq_qui5eihee1rai9IengieTeeN}
  \essosc_{[x, y]} u  \defeq
  \esssup_{t, r \in [0, 1]} \, \abs[\big]{u \brk[\big]{(1 - r)x + ry} - u \brk[\big]{(1 - t)x + ty}}
%   = \diam\, \brk[\big]{\essrg_{[x, y]} u}.
\end{equation}
\Cref{proposition_reverse_oscillation} is proved for \(n = 1\) and extended by Fubini-type arguments to higher dimension \cite{Mironescu_VanSchaftingen_CpctLift}; we give here a direct proof in all dimensions.

\begin{proof}[Proof of \cref{proposition_reverse_oscillation}]
\resetconstant
Since \(sp > 1\), we can fix \(\sigma \in \Rset\) so that \(\frac{1}{p} < \sigma  < s\).
There exists a constant \(\Cl{cst_ohshuNesh5wuchaeGh4akazo}\) such that for every \(x, y \in \Rset^n\), we have 
\begin{equation}
\label{eq_eu7oothe3aizeeChee8phu1j}
 \brk[\big]{\essosc_{[x, y]} u}^p  \le 
 \Cr{cst_ohshuNesh5wuchaeGh4akazo}\smashoperator{\iint\limits_{\intvc{0}{1} \times \intvc{0}{1}}} \frac{\abs{u ((1-t) x + t y) - u((1 - r) x + r y)}^p}{\abs{t - r}^{1 + \sigma p}} \dif t \dif r.
\end{equation}
Indeed, since \(\sigma p > 1\), by the fractional Morrey--Sobolev embedding there exists a constant \(\Cr{cst_ohshuNesh5wuchaeGh4akazo}\) such that (see \cite{DiNezza_Palatucci_Valdinoci_2012}*{\S 8}) for almost every \(\rho, \tau \in [0, 1]\),
\begin{multline} 
\label{eq_vohkaXu8fai3eesh5Soo7wah}
\abs[\big]{u \brk[\big]{(1 - \rho)x + \rho y} - u \brk[\big]{(1 - \tau)x + \tau y}}^p  \\
\le 
 \Cr{cst_ohshuNesh5wuchaeGh4akazo}\smashoperator{\iint\limits_{\intvc{0}{1} \times \intvc{0}{1}}} \frac{\abs{u ((1-t) x + t y) - u((1 - r) x + r y)}^p}{\abs{t - r}^{1 + \sigma p}} \dif t \dif r,
\end{multline}
and \eqref{eq_eu7oothe3aizeeChee8phu1j} follows from the definition of essential oscillation \eqref{eq_qui5eihee1rai9IengieTeeN} and from the estimate \eqref{eq_vohkaXu8fai3eesh5Soo7wah}.

Integrating \eqref{eq_eu7oothe3aizeeChee8phu1j} with respect to \(x, y \in \Omega\) we get 
 \begin{multline}
 \label{eq_sahThoo1aiK3gaeH5Ephah7a}
   \iint\limits_{\Omega \times \Omega}\frac{(\essosc_{[x, y]} u)^p}{\abs{y - x}^{n + sp}} \dif y \dif x\\
    \le \Cr{cst_ohshuNesh5wuchaeGh4akazo}
     \iint\limits_{\Omega \times \Omega}\smashoperator[r]{\iint\limits_{\intvc{0}{1} \times \intvc{0}{1}}} \frac{\abs{u ((1-t) x + t y) - u((1 - r) x + r y)}^p}{\abs{t - r}^{1 + \sigma p}\abs{y- x}^{n + sp}} \dif t \dif r \dif y \dif x.
\end{multline}
Applying in the right-hand side of \eqref{eq_eu7oothe3aizeeChee8phu1j} the change of variable \((x, y) \mapsto (w, z) = ((1-t) x + t y,  (1 - r) x + r y)\), we get 
  \begin{multline}
  \label{eq_VeepeG3choo8moh3eemiigho}
   \iint\limits_{\Omega \times \Omega}\frac{(\essosc_{[x, y]} u)^p}{\abs{y - x}^{n + sp}} \dif y \dif x
    \le \Cr{cst_ohshuNesh5wuchaeGh4akazo}
    \iint\limits_{\Omega \times \Omega}\iint\limits_{\Sigma_{z, w}} \frac{\abs{u (z)- u(w)}^p}{\abs{t - r}^{1  - (s - \sigma) p} \abs{z - w}^{n + sp}} \dif t \dif r \dif z \dif w.
  \end{multline}
  where for each \(z, w \in \Omega\) we have defined the set
  \begin{equation}
  \label{eq_oroothai0Iodiethaiph9Cah}
  \Sigma_{z, w} 
  \defeq
  \setcond*{
  (t, r) \in [0, 1]^2}{
  \tfrac{rz - tw}{r - t} \in \Omega 
  \text{ and } 
  \tfrac{(1 - r)z - (1 -t)w}{t - r} \in \Omega 
  }.
  \end{equation}
  We conclude by estimating the innermost integral in the right-hand side of \eqref{eq_oroothai0Iodiethaiph9Cah} by monotonicity of the integral as 
  \begin{equation}
  \label{eq_Eech8caid5uVe0Zaahi1owaN}
  \begin{split}
    \iint\limits_{\Sigma_{z, w}} \frac{1}{\abs{t - r}^{1 - (s - \sigma)p}} \dif t \dif r
    &\le \iint\limits_{\intvc{0}{1} \times \intvc{0}{1}} \frac{1}{\abs{t - r}^{1 - (s - \sigma)p}} \dif t \dif r\\
    & = \frac{1}{(s - \sigma)p} \int_{0}^1 \abs{1 - r}^{(s - \sigma)p} + \abs{r}^{(s - \sigma)p}\dif r < +\infty,
  \end{split}
  \end{equation}
  since \(\sigma > s\).
  The conclusion follows from \eqref{eq_VeepeG3choo8moh3eemiigho} and \eqref{eq_Eech8caid5uVe0Zaahi1owaN}.
\end{proof}

\begin{proof}%
[Proof of \cref{theorem_reverse_fractional}]%
\resetconstant
Since \(sp > 1\), for almost every \([x, y]\), by the fractional Morrey embedding, the closed set \(\essrg_{[x, y]} u \subset \essrg_{\Omega} u\) is compact and connected, and \(\essrg_{[x, y]} f \compose u = f (\essrg_{[x, y]} u)\), we have thus for almost every \(x, y \in \Omega\),
\begin{equation}
\label{eq_et1coh1eleebieghoo3ooTh3}
\begin{split}
  \abs{u (y) - u (x)} 
  \le \essosc_{[x, y]} u
  &= \diam (\essrg_{[x, y]} u)\\
  &\le \lambda 
  \diam (\essrg_{[x, y]} f\compose u)
  = \lambda\essosc_{[x, y]} f \compose u.
  \end{split}
\end{equation}
where 
\begin{equation}
 \lambda\defeq 
 \sup\, \setcond[\Big]{\frac{\diam (K)}{\diam (f(K))}}{K \subset \essrg u \text{ compact, connected and } \diam(K) > 0}.
\end{equation}
By \eqref{eq_et1coh1eleebieghoo3ooTh3} and the reverse oscillation inequality \cref{proposition_reverse_oscillation}, we conclude that there exists a constant \(C\) such that 
\begin{equation}
\iint\limits_{\Omega \times \Omega}\frac{\abs{u (y) - u (x)}^p}{\abs{y - x}^{n + sp}} \dif y \dif x
    \le C \lambda{}^p \iint\limits_{\Omega \times \Omega} \frac{\abs{f (u (y))- f(u(x))}^p}{\abs{y - x}^{n + sp}}  \dif y \dif x
    \eofs.
    \qedhere
\end{equation}
\end{proof}

\section{Counterexamples}

The following example shows that in the rough case \(sp < 1\), the fractional reverse estimate \cref{theorem_reverse_fractional} fails as soon as the function \(f\) is not injective.

\begin{proposition}
\label{proposition_non_estimate_sp_lt_1}
Let \(\Omega \subseteq \Rset^n\), \(s \in (0, 1)\) and \(p \in [1, +\infty)\).
If \(sp < 1\) and if the function \(f : \Rset^m \to \Rset^\ell\) is not injective, then there exists a sequence \((u_j)_{j \in \Nset}\) in \(\dot{W}^{s, p} (\Omega, \Rset^m)\) such that for every \(j \in \Nset\), the function \(f \compose u_j\) is constant \(\Omega\) and such that 
\begin{equation}
 \lim_{j \to \infty}
 \iint_{\Omega \times \Omega} \frac{\abs{u_j (y) - u_j (x)}^p}{\abs{y - x}^{n + sp}} \dif y \dif x = +\infty.
\end{equation}
\end{proposition}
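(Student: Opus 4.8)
The plan is to exploit that, precisely because \(sp < 1\), characteristic functions of bounded sets with Lipschitz boundary lie in \(\dot{W}^{s, p}\), and to build from them a sequence of two-valued maps on which \(f\) is constant while the Gagliardo energy blows up. Since \(f\) is not injective I first fix two points \(a \ne b\) in \(\Rset^m\) with \(f (a) = f (b)\).

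After a translation and a dilation (which alter the seminorm only by a fixed positive factor) we may assume \(\Omega \supseteq Q \defeq \intvo{0}{1}^n\); when \(\Omega\) is merely a set of positive measure one argues in the same way near a Lebesgue density point of \(\Omega\), and when \(\Omega\) is Lebesgue-negligible there is nothing to prove. For an integer \(N \ge 2\) let
\[
 E_N \defeq \bigcup_{\substack{1 \le k \le N \\ k \text{ odd}}} \intvo{\tfrac{k - 1}{N}}{\tfrac{k}{N}} \times \intvo{0}{1}^{n - 1} \subseteq Q
\]
be the union of the odd slabs in the partition of \(Q\) into \(N\) slabs of width \(1/N\), and set \(u_N \defeq a + (b - a) \chi_{E_N}\), extended by the constant value \(a\) on \(\Omega \setminus Q\). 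Since \(f (a) = f (b)\), the map \(f \compose u_N\) is identically \(f (a)\) on \(\Omega\), so it only remains to check that \(u_N \in \dot{W}^{s, p} (\Omega, \Rset^m)\) for every \(N\) and that its seminorm tends to \(+\infty\).

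Because \(u_N\) takes only the values \(a\) and \(b\),
\[
 \iint\limits_{\Omega \times \Omega} \frac{\abs{u_N (y) - u_N (x)}^p}{\abs{y - x}^{n + sp}} \dif y \dif x = 2 \abs{b - a}^p \iint\limits_{E_N \times (\Omega \setminus E_N)} \frac{\dif y \dif x}{\abs{y - x}^{n + sp}}.
\]
For fixed \(N\) the right-hand side is finite: bounding it by \(2 \abs{b - a}^p \iint_{E_N \times (\Rset^n \setminus E_N)} \abs{y - x}^{-n - sp} \dif y \dif x \le C \abs{b - a}^p \int_{E_N} \operatorname{dist} (x, \Rset^n \setminus E_N)^{-sp} \dif x < +\infty\) for the bounded set \(E_N\) with Lipschitz boundary — the finiteness being exactly where \(sp < 1\) is used (see \cite{DiNezza_Palatucci_Valdinoci_2012}) — gives \(u_N \in \dot{W}^{s, p} (\Omega, \Rset^m)\). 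For the divergence I would keep in the right-hand side only the interactions between consecutive slabs: writing \(S_1, \dotsc, S_N\) for the slabs of \(Q\), each adjacent pair \((S_k, S_{k + 1})\) has exactly one slab in \(E_N\), so the \(N - 1\) products \(S_k \times S_{k + 1}\) are pairwise disjoint subsets of \((E_N \times (\Omega \setminus E_N)) \cup ((\Omega \setminus E_N) \times E_N)\), whence
\[
 \iint\limits_{E_N \times (\Omega \setminus E_N)} \frac{\dif y \dif x}{\abs{y - x}^{n + sp}} \ge \frac{1}{2} \sum_{k = 1}^{N - 1} \iint\limits_{S_k \times S_{k + 1}} \frac{\dif y \dif x}{\abs{y - x}^{n + sp}}.
\]
Tiling the cross-section \(\intvo{0}{1}^{n - 1}\) by the \(N^{n - 1}\) cubes of side \(1/N\) and observing that on the corresponding box inside \(S_k \times S_{k + 1}\) one has \(\abs{y - x} \le C_n/N\) on a set of measure \(N^{-2n}\), each interface integral is \(\ge c_n N^{sp - 1}\) with \(c_n > 0\) independent of \(N\) and \(k\); combining the three displays,
\[
 \iint\limits_{\Omega \times \Omega} \frac{\abs{u_N (y) - u_N (x)}^p}{\abs{y - x}^{n + sp}} \dif y \dif x \ge c_n \abs{b - a}^p (N - 1) N^{sp - 1},
\]
which tends to \(+\infty\) as \(N \to \infty\) since \(sp > 0\). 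Relabelling gives the required sequence.

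The argument is short; the only computation demanding care is the uniform interface bound \(\iint_{S_k \times S_{k + 1}} \abs{y - x}^{-n - sp} \dif y \dif x \ge c_n N^{sp - 1}\), where one must track that the constant does not depend on \(N\). The conceptual point to get right is the \emph{scaling}: a single interface of width \(1/N\) contributes only \(\approx N^{sp - 1} \to 0\) as \(N \to \infty\), so one must superpose of order \(N\) interfaces to force divergence; the hypothesis \(0 < sp < 1\) is thus used twice — \(sp < 1\) to keep each \(u_N\) of finite seminorm, and \(sp > 0\) so that the sum of the \(\approx N\) interface contributions diverges.
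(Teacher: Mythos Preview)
Your argument is correct and follows essentially the same approach as the paper: pick two points with the same image under \(f\), build an oscillating two-valued step function with \(\approx N\) interfaces, and check that each interface contributes \(\approx N^{sp-1}\) to the Gagliardo energy so that the total diverges like \(N^{sp}\). The paper carries this out explicitly only for \(\Omega = (0,1)\subset\Rset\) and declares the other cases similar, whereas you work directly in \(\Rset^n\) using slabs and a tiling of the cross-section; the substance is the same.
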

\begin{proof}
We consider the case \(\Omega = (0, 1)\subset\Rset\); the other cases are similar.
By assumption, there exist two points \(b_0, b_1 \in \Rset^m\) such that \(f (b_0) = f (b_1)\). For each \(j \in \Nset\) we define  the function  \(u_j : (0, 1) \to \Rset^m\) for every \(x \in (0, 1)\) by 
\begin{equation}
u_j (x) \defeq
\begin{cases}
 b_0 &\text{if \(j x \in [2k, 2k + 1)\) for some \(k \in \Zset\),}\\
 b_1 &\text{if \(j x \in [2k + 1, 2(k +1))\) for some \(k \in \Zset\).}
\end{cases}
\end{equation}
By construction, for each \(j \in \Nset\), we have  \(f(u_j) = f(b_0)= f(b_1)\) everywhere in the interval \((0, 1)\).
Estimating 
\begin{equation}
\begin{split}
 \smashoperator{\iint_{(0, 1) \times (0, 1)}} \frac{\abs{u_j (y) - u_j (x)}^p}{\abs{y - x}^{1 + sp}} \dif y \dif x
 &= \sum_{\ell = 0}^{j - 1}\smashoperator[r]{\iint_{(\frac{\ell}{j}, \frac{\ell + 1}{j}) \times (0, 1)}}\frac{\abs{u_j (y) - u_j (x)}^p}{\abs{y - x}^{1 + sp}} \dif y \dif x\\
 &\le \sum_{\ell = 0}^{j - 1} 
 \smashoperator[r]{\iint_{(\frac{\ell}{j}, \frac{\ell + 1}{j}) \times \Rset\setminus (\frac{\ell}{j}, \frac{\ell + 1}{j})}}\frac{\abs{b_0 - b_1}^p}{\abs{y - x}^{1 + sp}} \dif y \dif x\\
 &= j^{sp} \smashoperator[l]{\iint_{(0, 1) \times \Rset\setminus (0, 1)}} \frac{\abs{b_0 - b_1}^p}{\abs{y - x}^{1 + sp}} \dif y \dif x = \frac{2 j^{sp}\abs{b_0 - b_1}^p}{sp(1 - sp)},
 \end{split}
\end{equation}
we infer that for each \(j \in \Nset\), we have \(u_j \in \dot{W}^{s, p} ((0, 1), \Rset^m)\).
Finally, we have if \(j \in \Nset_*\),
\begin{equation}
\begin{split}
 \smashoperator{\iint_{(0, 1) \times (0, 1)}} \frac{\abs{u_j (y) - u_j (x)}^p}{\abs{y - x}^{1 + sp}} \dif y \dif x
 &\ge \sum_{\ell = 1}^{j - 1}
 \smashoperator[r]{\iint_{(\frac{\ell - 1}{j}, \frac{\ell}{j}) \times (\frac{\ell}{j}, \frac{\ell + 1}{j})}}\frac{\abs{b_0 - b_1}^p}{\abs{y - x}^{1 + sp}} \dif y \dif x\\
 &\ge (1- \tfrac{1}{j})j^{sp} \smashoperator[l]{\iint_{(-1, 0)\times (0, 1)}} \frac{\abs{b_0 - b_1}^p}{\abs{y - x}^{1 + sp}} \dif y \dif x\\
  & = \frac{2(1- \tfrac{1}{j})j^{sp}(1- 2^{-sp})}{sp(1 - sp)}\abs{b_1 - b_0}^{p},
\end{split} 
\end{equation}
which goes to \(+\infty\) as \(j \to \infty\).
\end{proof}
Finally, in the critical case \(sp = 1\), the fractional reverse estimate of  \cref{theorem_reverse_fractional} fails when the function \(f\) is Lipschitz continuous and not injective.

\begin{proposition}
\label{proposition_non_estimate_sp_eq_1}
Let \(\Omega \subseteq \Rset^n\), \(s \in (0, 1)\) and \(p \in [1, +\infty)\).
If \(sp = 1\), if the function \(f : \Rset^m \to \Rset^\ell\) is Lipschitz-continuous and is not injective, then there exists a sequence \((u_j)_{j \in \Nset}\) in \(\dot{W}^{s, p} (\Omega, \Rset^m)\) such that  
\begin{equation}
 \lim_{j \to \infty}
 \smashoperator{\iint_{\Omega \times \Omega}} \frac{\abs{u_j (y) - u_j (x)}^p}{\abs{y - x}^{n + sp}} \dif y \dif x = +\infty
\end{equation}
and 
\begin{equation}
 \sup_{j \in \Nset}
 \smashoperator{\iint_{\Omega \times \Omega}} \frac{\abs{f \compose u_j (y) - f \compose u_j (x)}^p}{\abs{y - x}^{n + sp}} \dif y \dif x < +\infty.
\end{equation}
\end{proposition}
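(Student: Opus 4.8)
The plan is to construct the sequence by rescaling a single fixed Lipschitz ``transition profile'', exploiting the fact that in dimension one the Gagliardo $\dot{W}^{s,p}$ seminorm is \emph{scale invariant} precisely when $sp = 1$. As in the proof of \cref{proposition_non_estimate_sp_lt_1}, I would treat the case $\Omega = \intvo{0}{1} \subseteq \Rset$, the other cases being similar. Since $f$ is not injective, fix $b_0, b_1 \in \Rset^m$ with $b_0 \ne b_1$ and $f(b_0) = f(b_1)$; fix a Lipschitz map $\gamma \colon \Rset \to \Rset^m$ with $\gamma = b_0$ on $\intvl{-\infty}{0}$ and $\gamma = b_1$ on $\intvr{1}{+\infty}$; and set, for every $j \in \Nset$,
\begin{equation*}
 u_j (x) \defeq \gamma\brk[\big]{j x - \tfrac{j - 1}{2}}, \qquad x \in \intvo{0}{1},
\end{equation*}
so that $u_j$ equals the constant $b_0$, resp.\ $b_1$, outside the interval of length $1/j$ centred at $1/2$ on which it interpolates between these values.

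The key step is the change of variables $t = j x - \tfrac{j - 1}{2}$. Setting $\eta \defeq \gamma - b_0$, $h \defeq f \compose \gamma - f(b_0)$ and $J_j \defeq \intvo{\tfrac{1 - j}{2}}{\tfrac{1 + j}{2}}$, and using that $(1 + sp) - 2 = 0$ because $sp = 1$, I would obtain for every $j \in \Nset$
\begin{equation*}
 \iint\limits_{\intvo{0}{1} \times \intvo{0}{1}} \frac{\abs{u_j (y) - u_j (x)}^p}{\abs{y - x}^{1 + sp}} \dif y \dif x
 = \iint\limits_{J_j \times J_j} \frac{\abs{\eta (t') - \eta (t)}^p}{\abs{t' - t}^{1 + sp}} \dif t' \dif t,
\end{equation*}
together with the same identity with $u_j$ and $\eta$ replaced by $f \compose u_j$ and $h$ respectively. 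Since $\eta$ and $h$ are Lipschitz and $sp = 1 < 1 + \tfrac{1}{p}$, the right-hand sides are finite for each fixed $j$, so that $u_j \in \dot{W}^{s, p}(\intvo{0}{1}, \Rset^m)$; and since $J_j \uparrow \Rset$, they converge as $j \to \infty$, by monotone convergence, to the corresponding integrals over $\Rset \times \Rset$.

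It then remains to show that this limit integral is $+\infty$ for $\eta$ but finite for $h$. For $h$: because $f(b_1) = f(b_0)$, the function $h$ vanishes on $\intvl{-\infty}{0} \cup \intvr{1}{+\infty}$, so it is a compactly supported Lipschitz function vanishing at the two endpoints $0$ and $1$ of its support; splitting $\Rset \times \Rset$ into a bounded part, on which a Lipschitz function has finite Gagliardo seminorm since $s < 1 + \tfrac{1}{p}$, and its complement, on which $h$ vanishes at $x$ or at $y$ and its linear decay near the endpoints of its support dominates the integrable kernel $\abs{t' - t}^{-2}$, yields finiteness. For $\eta$: here $\eta \equiv 0$ on $\intvl{-\infty}{0}$ whereas $\eta \equiv b_1 - b_0 \ne 0$ on $\intvr{1}{+\infty}$, so restricting the integral to the set where $t < 0 < 1 < t'$, on which $\abs{\eta (t') - \eta (t)} = \abs{b_1 - b_0}$, and invoking the logarithmic divergence of $\iint \abs{t' - t}^{-2}$ over that region gives $+\infty$. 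Hence the Gagliardo seminorm of $u_j$ over $\intvo{0}{1}$ tends to $+\infty$ while that of $f \compose u_j$ stays bounded.

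The only genuine obstacle is the uniform bound on $f \compose u_j$: the pointwise Lipschitz estimate $\abs{f (u_j (y)) - f (u_j (x))} \le L \abs{u_j (y) - u_j (x)}$ is useless, its right-hand side blowing up with $j$. What rescues the argument is that the identity $f(b_0) = f(b_1)$ annihilates, for $f \compose u_j$, exactly the long-range ``left plateau against right plateau'' interaction — the one divergent contribution — while scale invariance at $sp = 1$ keeps every remaining, localised, contribution bounded; this is precisely the dichotomy between $h$ being compactly supported and $\eta$ not, which the computation above isolates.
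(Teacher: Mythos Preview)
Your proof is correct and follows essentially the same route as the paper: a Lipschitz transition profile between two points $b_0,b_1$ with $f(b_0)=f(b_1)$, rescaled so that the transition occurs on an interval of length $1/j$, together with the scale invariance of the one-dimensional Gagliardo energy at $sp=1$. The paper works on $(-1,1)$ with the explicit piecewise-linear profile $u_*$ and bounds $\iint_{(-1,1)^2}\le\iint_{\Rset^2}$ directly, whereas you work on $(0,1)$ with a generic Lipschitz $\gamma$ and pass to the limit via monotone convergence over the growing windows $J_j$; these are cosmetic differences, and your explicit discussion of why the full-line integral is finite for $h$ (compactly supported Lipschitz, vanishing at both endpoints) but infinite for $\eta$ (nonzero plateau at $+\infty$) makes precise what the paper leaves implicit. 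One small remark: your conditions ``$sp=1<1+\tfrac1p$'' and ``$s<1+\tfrac1p$'' are trivially satisfied; the relevant fact ensuring local finiteness of the Gagliardo integral for Lipschitz functions is simply $p>1$ (equivalently $s<1$), which you do have since $sp=1$.
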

\begin{proof}
We concentrate on the case \(\Omega = (-1, 1)\subset \Rset\), the other cases being similar.
By our assumption, there are two points \(b_0, b_1 \in \Rset^m\) such that \(f (b_0) = f (b_1)\). 
We define the function \(u_* : \Rset \to \Rset^n\) for each  \(t \in \Rset\) by 
\begin{equation}
 u_* (t) \defeq 
 \begin{cases}
  b_0 & \text{if \(t \le - 1\)},\\
  \frac{1 - t}{2} b_0 + \frac{1+t}{2} b_1 & \text{if \(-1 < t < 1\)},\\
  b_1 & \text{if \(t \ge 1\)}.
 \end{cases}
\end{equation}
and we define for every \(j \in \Nset\) the function \(u_j : (-1, 1) \to \Rset^m\) by setting for each \(x \in (-1, 1)\), \(u_j (x) \defeq u(jx)\).
Since the function \(u_*\) is Lipschitz-continuous, we have \(u_j \in \dot{W}^{s, p} ((-1, 1), \Rset^m)\).
Since \(sp = 1\), we have for every \(j \in \Nset\),
\begin{equation}
\begin{split} \smashoperator{\iint_{(-1, 1) \times (-1, 1)}} \frac{\abs{f (u_j (y)) - f (u_j (x)})^p}{\abs{y - x}^{1 + sp}} \dif y \dif x
 &\le  \smashoperator{\iint_{\Rset \times \Rset}} \frac{\abs{f (u_* (j y)) - f (u_* (j x))}^p}{\abs{y - x}^{2}} \dif y \dif x\\
 &= \smashoperator{\iint_{\Rset \times \Rset}} \frac{\abs{f (u_* (y)) - f (u_* (x))}^p}{\abs{y - x}^{2}} \dif y \dif x <+\infty.
\end{split}
\end{equation}
On the other hand, we have for every \(j \in \Nset\)
\begin{equation}
  \smashoperator{\iint_{(-1, 1) \times (-1, 1)}} \frac{\abs{u_j (y) - u_j (x)}^p}{\abs{y - x}^{1 + sp}} \dif y \dif x
  \ge 2 \int_{-1}^{-\frac{1}{j}} \int_{\frac{1}{j}}^1
  \frac{\abs{b_1 - b_0}^p}{\abs{y - x}^2} \dif y \dif x
  = 2 \abs{b_1 - b_0}^p \ln \frac{(j + 1)^2}{4 j},
\end{equation}
which blows up as \(j \to \infty\).
\end{proof}

\begin{bibdiv}
\begin{biblist}

\bib{Ambrosio_DalMaso_1990}{article}{
   author={Ambrosio, L.},
   author={Dal Maso, G.},
   title={A general chain rule for distributional derivatives},
   journal={Proc. Amer. Math. Soc.},
   volume={108},
   date={1990},
   number={3},
   pages={691--702},
   issn={0002-9939},
%    review={\MR{969514}},
   doi={10.2307/2047789},
}
\bib{DiNezza_Palatucci_Valdinoci_2012}{article}{
   author={Di Nezza, Eleonora},
   author={Palatucci, Giampiero},
   author={Valdinoci, Enrico},
   title={Hitchhiker's guide to the fractional Sobolev spaces},
   journal={Bull. Sci. Math.},
   volume={136},
   date={2012},
   number={5},
   pages={521--573},
   issn={0007-4497},
   review={\MR{2944369}},
   doi={10.1016/j.bulsci.2011.12.004},
}
\bib{Gilbarg_Trudinger_1983}{book}{
   author={Gilbarg, David},
   author={Trudinger, Neil S.},
   title={Elliptic partial differential equations of second order},
   series={Grundlehren der Mathematischen Wissenschaften},
   volume={224},
   edition={2},
   publisher={Springer}, 
   address={Berlin},
   date={1983},
   pages={xiii+513},
   isbn={3-540-13025-X},
%    review={\MR{737190}},
   doi={10.1007/978-3-642-61798-0},
}

\bib{Leoni_2017}{book}{
      author={Leoni, Giovanni},
      title={A first course in Sobolev spaces},
      series={Graduate Studies in Mathematics},
      volume={181},
      edition={2},
      publisher={American Mathematical Society}, 
      address={Providence, R.I.},
      date={2017},
      pages={xxii+734},
      isbn={978-1-4704-2921-8},
%       review={\MR{3726909}},
    }

\bib{Marcus_Mizel_1972}{article}{
   author={Marcus, M.},
   author={Mizel, V. J.},
   title={Absolute continuity on tracks and mappings of Sobolev spaces},
   journal={Arch. Rational Mech. Anal.},
   volume={45},
   date={1972},
   pages={294--320},
   issn={0003-9527},
%    review={\MR{338765}},
   doi={10.1007/BF00251378},
}
		
\bib{Mironescu_VanSchaftingen_CpctLift}{article}{
  author={Mironescu, Petru},
  author={Van Schaftingen, Jean},
  title={Lifting in compact covering spaces for fractional Sobolev mappings}
  journal={to appear in Anal. PDE},
  eprint={arXiv:1907.01373},
}

\bib{Musina_Nazarov_2019}{article}{
   author={Musina, Roberta},
   author={Nazarov, Alexander I.},
   title={A note on truncations in fractional Sobolev spaces},
   journal={Bull. Math. Sci.},
   volume={9},
%    date={2019},
   number={1},
   pages={1950001, 7},
   issn={1664-3607},
%    review={\MR{3949429}},
   doi={10.1142/S1664360719500012},
}

\bib{Willem_2013}{book}{
   author={Willem, Michel},
   title={Functional analysis},
   subtitle={Fundamentals and applications},
   publisher={Birkh\"{a}user/Springer, New York},
   date={2013},
   pages={xiv+213},
   isbn={978-1-4614-7003-8},
   isbn={978-1-4614-7004-5},
%    review={\MR{3112778}},
   doi={10.1007/978-1-4614-7004-5},
}

\bib{Ziemer_1989}{book}{
   author={Ziemer, William P.},
   title={Weakly differentiable functions},
   series={Graduate Texts in Mathematics},
   volume={120},
   subtitle={Sobolev spaces and functions of bounded variation},
   publisher={Springer}, 
   address={New York},
   date={1989},
   pages={xvi+308},
   isbn={0-387-97017-7},
%    review={\MR{1014685}},
   doi={10.1007/978-1-4612-1015-3},
}
		
\end{biblist}
\end{bibdiv}

\end{document}